\newtheorem{thm}{Theorem}[section]
\newtheorem{lemma}[thm]{Lemma}
\newtheorem{prop}[thm]{Proposition}
\theoremstyle{definition}
\newtheorem{defn}[thm]{Definition}
\theoremstyle{remark}
\newtheorem{ex}{Example}
\numberwithin{equation}{section}
\newcommand{\eps}{\varepsilon}
\newcommand{\Real}{\mathbb R}\newcommand{\Complex}{\mathbb C}
\newcommand{\B}{\mathcal{B}}
\newcommand{\h}{\mathcal{H}}
\newcommand{\N}{\mathcal{N}}
\newcommand{\M}{\mathcal{M}}
\newcommand{\R}{\mathcal{R}}
\newcommand{\V}{\mathcal{V}}
\newcommand{\W}{\mathcal{W}}
\newcommand{\rlin}[1]{\mathcal{B}(#1)}
\DeclareMathOperator*{\spn}{span}
\DeclareMathOperator*{\re}{Re}
\begin{document}

\title[Weyl--von Neumann Theorem for Antilinear Operators]{On a Weyl--von Neumann -type Theorem for Antilinear Self-adjoint Operators}
\author[S. Ruotsalainen]{Santtu Ruotsalainen}
\address{
Aalto University \\
Institute of Mathematics \\
P.O. Box 11100 \\
FI-00076 Aalto \\
Finland}
\email{Santtu.Ruotsalainen [at] aalto.fi}

\subjclass{Primary 47A10; Secondary 47B38}
\keywords{Antilinear operator, diagonalizable operator, Weyl--von Neumann theorem, conjugation}
\date{\today}
\begin{abstract}
Antilinear operators on a complex Hilbert space arise in various contexts in mathematical physics. In this paper, an analogue of  the Weyl--von Neumann theorem for antilinear self-adjoint operators is proved, i.e. that an antilinear self-adjoint operator is the sum of a diagonalizable operator and of a compact operator with arbitrarily small Schatten $p$-norm. In doing so, we discuss conjugations and their properties. A spectral integral representation for antilinear self-adjoint operators is constructed. 
\end{abstract}

\maketitle
\tableofcontents

\section{Introduction}

This paper is concerned with an analogue of the Weyl--von Neumann theorem for self-adjoint antilinear operators on a complex Hilbert space $\h$. The Weyl--von Neumann--Berg theorem states that a complex linear normal operator is the sum of a diagonalizable and an arbitrarily small compact operator. An operator is diagonalizable if  it has an orthonormal set of eigenvectors spanning $\h$. Motivated by the occurrence of antilinear operators in a wide range of mathematical physics applications, it is natural to inquire in what respect there is a Weyl--von Neumann -type theorem for antilinear operators. It is shown that any antilinear self-adjoint operator is the sum of a diagonalizable operator and an operator of arbitrarily small Schatten $p$-norm. 

The Weyl--von Neumann theorem and its ramifications have been near the center of operator theory for the majority of the past century and have led to new operator theoretical techniques. In 1909 Weyl proved that a self-adjoint complex linear operator can be diagonalized modulo an arbitrarily small compact operator \cite{Weyl1909}. In 1935, von Neumann extended the result to unbounded operators and showed that the compact operator can be taken to be Hilbert-Schmidt \cite{vonN1935}. Halmos \cite{Halmos1970} raised the question if there exists an analogous result for normal operators. Berg \cite{Berg1971} and Halmos himself \cite{Halmos1972}, with a different technique, proved that any normal operator is diagonalizable modulo a compact operator. Voiculescu's result is the most general one stating that also for normal operators  the compact perturbation has an  arbitrarily small Hilbert--Schmidt norm \cite{Voiculescu1976}. 

For complex linear operators on finite dimensional spaces, unitary diagonalizability is one of the many equivalent conditions for  normality \cite{grone.etal:normals, elsner.ikramov:normals}. For antilinear operators, and for real linear operators in general, these conditions are no longer equivalent and the notion of normality is not so straightforward. Antilinear self-adjoint operators on finite dimensional spaces can be unitarily diagonalized, whereas an antilinear operator commuting with its adjoint may not allow for such in general. This is how self-adjointness of antilinear operators is an analogue of normality of complex linear operators. Therefore, it is a natural question to ask what is the analogue of the Weyl--von Neumann theorem in the antilinear infinite dimensional setting. In addition, as the spectral theory of real linear operators in general is not totally understood, a Weyl--von Neumann -type theorem would be informative about spectral properties of antilinear self-adjoint operators. 

Antilinear operators appear in a plethora of applications and their usefulness has not remained unnoticed \cite{herbut:basicalgebra, DidenkoSilbermann, reallinearoperator}. Antilinear operators are found in the study of planar elasticity  in the form of the Friedrichs operator \cite{putinarshapiro}. In inverse problems they arise in solving the so-called $\partial$-bar equation in the plane \cite{astalapaivarinta}. In quantum mechanics antilinearity is classically noted in studying time reversal but antilinear operators are useful in the Hartree--Bogolyubov theory in nuclear physics \cite{herbut:basicalgebra}, in studying quantum entanglement \cite{AV:EPR} and quantum teleportation \cite{KKAJQTelep}. In addition, self-adjoint antilinear operators are naturally linked to complex symmetric operators which are of importance in mathematical physics \cite{garcia, garcia2, garcia:newclasses}. 

The paper is organized as follows. In Section 2, notation, basic notions and properties of real linear and antilinear operators are given. Conjugations and their diagonalizability are studied and antilinear projections onto a closed subspace are discussed. In Section 3, the representation of an antilinear self-adjoint operator as a spectral integral is developed. In Section 4, the main theorem, Weyl--von Neumann theorem for antilinear self-adjoint operators, is proved. The connection to complex symmetric operators is presented. 

\section{Antilinear operators and conjugations}

Let $\h$ be a separable Hilbert space over $\Complex$. An operator $A$ on $\h$ is said to be real linear, if it is additive and commutes with real numbers. It is called complex linear if in addition it satisfies $Ai=iA$, or antilinear if it satisfies $Ai=-iA$. The set of real linear operators is a real Banach algebra with the operator norm 
$$\|A\|= \sup \{ \| Ax\| : \|x\|=1\}$$
and it is denoted by $\rlin \h$. Every real linear operator $A$ can be decomposed as 
\begin{equation}\label{complexantirep}
A=A_0+A_1
\end{equation} 
with $A_0=\frac 12 (A-iAi)$ complex linear and $A_1=\frac 12(A+iAi)$ antilinear. 

A number $\lambda \in \Complex$ is in the spectrum $\sigma(A)$ of $A$ if $A-\lambda$ is not invertible in $\rlin \h$. The number $\lambda$ is an eigenvalue and is in the point spectrum $\sigma_p(A)$ if there exists a non-zero vector $x\in \h$ such that $(A-\lambda)x=0$. The number $\lambda$ is in the approximate point spectrum $\sigma_a(A)$ if there is a sequence of unit vectors $\{x_n\}_{n\geq 1} \subset \h$ such that $(A-\lambda)x_n \xrightarrow[n\to\infty]{} 0$ in $\h$. 
The number $\lambda$ is in the compression spectrum $\sigma_c(A)$ if the range $\R (A-\lambda)$ is not dense in $\h$. 

The adjoint $A^*$ of a real linear operator $A$ is defined by 
$$\re (Ax,y) = \re (x, A^* y) \quad \text{ for all $x,y\in \h$,} $$
where $(\cdot,\cdot)$ is the inner product in $\h$. Equivalently, using the representation \eqref{complexantirep}, we can define 
$$A^*=A_0^*+A_1^*, $$
where $A_0^*$ and $A_1^*$ satisfy 
$ (A_0x,y)= (x, A_0^*y)$ and  $(A_1 x,y) = \overline{(x, A_1^*y)} $
for all $x,y\in\h$, respectively. A real linear operator $A$ is said to be self-adjoint if $A=A^*$. It is unitary if it is bijective and an isometry, i.e. $\| A x \| = \|x\|$ for all $x\in \h$. 
A unitary operator is called complex unitary or antiunitary if it is complex linear or antilinear, respectively. 

The spectral theory for real linear operators is not fully understood. It is known that the spectrum of a real linear operator is compact. It is possible for the spectrum to be empty. However, the spectrum of a self-adjoint real linear operator is known to be non-empty. It is not necessarily real but it is  symmetric with respect to the real line. The spectrum of an antilinear operator is always circularly symmetric with respect to the origin. See \cite{reallinearoperator} for more details. 

Among the simplest antilinear operators are the so-called conjugations. 
\begin{defn}
An antilinear operator $\kappa \in \rlin \h$ is a conjugation on $\h$ if it is an involution, i.e. it satisfies $\kappa^2 = I$. 
\end{defn}
For a conjugation, we have the following. 
\begin{prop}\label{pointspectrumofconjugation}
For a conjugation $\kappa$ on $\h$ there holds 
$$\sigma_p(\kappa) = \sigma(\kappa) = \{ e^{i\theta} : \theta\in\Real \} .$$  
\end{prop}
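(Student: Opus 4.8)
The plan is to prove the chain of equalities by establishing the point spectrum is exactly the unit circle and then bounding the full spectrum from outside, so that
$$ \{e^{i\theta}:\theta\in\Real\} \subseteq \sigma_p(\kappa) \subseteq \sigma(\kappa) \subseteq \{e^{i\theta}:\theta\in\Real\}. $$
First I would check that every eigenvalue lies on the unit circle. If $\kappa x = \lambda x$ with $x\neq 0$, then applying $\kappa$ once more and using antilinearity together with $\kappa^2=I$ gives $x = \kappa^2 x = \overline{\lambda}\,\kappa x = |\lambda|^2 x$, whence $|\lambda|=1$. This yields $\sigma_p(\kappa)\subseteq\{e^{i\theta}:\theta\in\Real\}$ immediately.

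The heart of the argument is the reverse inclusion: producing, for each $\theta$, an actual eigenvector. Here I would simply guess the form $x = y + e^{-i\theta}\kappa y$ for an arbitrary $y\in\h$. A one-line computation, using antilinearity to pull the scalar out as a conjugate and $\kappa^2=I$, gives $\kappa x = \kappa y + e^{i\theta} y = e^{i\theta}\,x$, so $x$ is an eigenvector \emph{provided it is nonzero}. The main obstacle is precisely this nonvanishing: I would argue that if $x=0$ for every choice of $y$, then $\kappa y = -e^{i\theta} y$ for all $y$; replacing $y$ by $iy$ and invoking antilinearity ($\kappa(iy)=-i\kappa y$) forces $y=-y$, hence $y=0$ for all $y$, which is impossible in a nonzero Hilbert space. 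Therefore some $y$ gives a genuine eigenvector and $e^{i\theta}\in\sigma_p(\kappa)$.

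Finally I would show the spectrum cannot reach beyond the circle, by exhibiting an explicit resolvent. For $\lambda$ with $|\lambda|\neq 1$, the antilinearity relations $\kappa\lambda=\overline{\lambda}\kappa$ and $\kappa\overline{\lambda}=\lambda\kappa$ make $\kappa+\overline{\lambda}$ a two-sided inverse of $\kappa-\lambda$ up to a scalar, since
$$ (\kappa+\overline{\lambda})(\kappa-\lambda) = (\kappa-\lambda)(\kappa+\overline{\lambda}) = (1-|\lambda|^2)\,I. $$
Because $1-|\lambda|^2\neq 0$, this shows $(\kappa-\lambda)\inv = (1-|\lambda|^2)\inv(\kappa+\overline{\lambda})$ exists in $\rlin\h$, so $\lambda\notin\sigma(\kappa)$ and hence $\sigma(\kappa)\subseteq\{e^{i\theta}:\theta\in\Real\}$. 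Combining this with the inclusions from the first two steps closes the chain and gives $\sigma_p(\kappa)=\sigma(\kappa)=\{e^{i\theta}:\theta\in\Real\}$. I expect the only genuinely delicate point to be the nonvanishing of the constructed eigenvector; everything else reduces to the two algebraic identities governing how $\kappa$ commutes with complex scalars.
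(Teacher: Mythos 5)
Your proof is correct, and it is self-contained in a way the paper's is not: the paper leans on the circular symmetry of the spectrum of an antilinear operator (a fact it imports from the real linear operator literature) to reduce everything to $\lambda=1$ and to non-negative real $\lambda=r$, whereas you handle every $\lambda$ directly. Your eigenvector construction is at heart the same dichotomy the paper uses --- the paper takes $(\kappa+1)x$, which is your $y+e^{-i\theta}\kappa y$ at $\theta=0$, and falls back on $ix$ when it vanishes, exactly as you fall back on $iy$ --- but you run it for arbitrary $e^{i\theta}$ instead of invoking circular symmetry of the point spectrum. Where you genuinely diverge is in excluding $|\lambda|\neq 1$: the paper shows such points lie in neither the approximate point spectrum nor the compression spectrum (using the identity $(\kappa+r)(\kappa-r)=(1-r^2)I$ for real $r$), which implicitly relies on $\sigma=\sigma_a\cup\sigma_c$ for real linear operators; you instead exhibit the explicit two-sided resolvent $(\kappa-\lambda)\inv=(1-|\lambda|^2)\inv(\kappa+\overline{\lambda})$, which is cleaner and dodges that implicit step. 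One small economy worth noting: you do not need the ``for every $y$'' contradiction --- for a single fixed nonzero $y$, either $y+e^{-i\theta}\kappa y\neq 0$ and you are done, or $\kappa y=-e^{i\theta}y$ and then $iy$ is itself an eigenvector for $e^{i\theta}$. You also supply the inclusion $\sigma_p(\kappa)\subseteq\{e^{i\theta}:\theta\in\Real\}$ via $|\lambda|^2x=x$, which the paper gets for free from its resolvent analysis; in your write-up that first step is actually subsumed by the third and could be dropped.
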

\begin{proof}
The conjugation $\kappa$ being antilinear, its spectrum is circularly symmetric with respect to the origin. For any nonzero $x\in \h$ we have $(\kappa -1)(\kappa + 1)x = (\kappa^2 -1)x = 0$. 
Now, either $(\kappa+1)x=0$, whence $(\kappa -1) ix = 0$, or $(\kappa+1)x\neq 0$, whence $(\kappa-1)y=0$ with $y=(\kappa+1)x$. Thus $1\in\sigma_p(\kappa)$. 

On the other hand, if $r\neq 1$ is a non-negative real number, then $(\kappa+r)(\kappa-r)x_n=(1-r^2)x_n$ does not tend to zero for any sequence of unit vectors $\{x_n\}$. That is, $r$ is not in the approximate point spectrum $\sigma_a(\kappa)$. Similarly, $(\kappa^*-r)(\kappa^*+r)x=(1-r^2)x\neq 0$ for any $x\neq 0$. Thus $r$ is not in the compression spectrum $\sigma_c(\kappa)$. 
\end{proof}

Unitary conjugations being norm-preserving are of natural interest. Moreover, for antilinear (as well as for complex linear) operators, being involutory, self-adjoint or unitary are properties of which any two imply the third.  

Given an orthonormal basis $\{e_n\}$ of $\h$, define $\kappa$, the conjugation with respect to $\{e_n\}$, to be the antilinear operator for which $\kappa e_n = e_n$ for all $n$. Clearly, $\kappa$ is unitary. The content of the next proposition is the converse, i.e. for any unitary conjugation $\kappa$ there is an orthonormal basis of $\h$ with respect to which $\kappa$ can be defined. 

\begin{prop}\label{eigbasisforconj}
Let $\kappa$ be a unitary conjugation on $\h$. 
Then there is an orthonormal basis $\{e_n\}_{n=1}^\infty$ of $\h$ such that $\kappa e_n = e_n$. 
\end{prop}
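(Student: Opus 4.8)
The plan is to realize the desired basis as an orthonormal basis of the real-linear subspace of vectors fixed by $\kappa$. Set $\h^\kappa = \{x\in\h : \kappa x = x\}$; since $\kappa$ is continuous and real linear, this is a closed real-linear subspace. The first observation is that $\kappa$ splits $\h$ into fixed and ``anti-fixed'' parts. For any $x\in\h$, the vector $u = \frac12(x+\kappa x)$ satisfies $\kappa u = u$, and the vector $v = -\frac{i}{2}(x-\kappa x)$ also satisfies $\kappa v = v$: here the antilinearity of $\kappa$ produces the sign flip that makes $v$ fixed rather than negated, using $\kappa(x-\kappa x) = -(x-\kappa x)$ together with $\kappa^2 = I$. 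Since $u + iv = x$, this gives $\h = \h^\kappa + i\h^\kappa$, so every vector of $\h$ is a complex combination of two $\kappa$-fixed vectors.

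The key step is to show that the inner product takes real values on $\h^\kappa$. Because $\kappa$ is a unitary, and hence (being also involutory) self-adjoint, conjugation, the defining relation for the antilinear adjoint gives $(\kappa x,y) = \overline{(x,\kappa y)}$ for all $x,y$; substituting $y=\kappa w$ and using $\kappa^2 = I$ yields $(\kappa x, \kappa w) = \overline{(x,w)}$. Applying this to fixed vectors $u,v\in\h^\kappa$ gives $(u,v) = (\kappa u,\kappa v) = \overline{(u,v)}$, so $(u,v)\in\Real$. Consequently $\h^\kappa$, equipped with the restriction of $(\cdot,\cdot)$, is a real inner product space, and a family in $\h^\kappa$ that is orthonormal there is automatically orthonormal in $\h$.

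With this in hand I would use the separability of $\h$ and the Gram--Schmidt process to choose an orthonormal basis $\{e_n\}_{n=1}^\infty$ of the real Hilbert space $\h^\kappa$; by construction $\kappa e_n = e_n$. It then remains to verify completeness of $\{e_n\}$ in $\h$ over $\Complex$. Given $y\in\h$, write $y = u + iv$ with $u,v\in\h^\kappa$ as above; since $(u,e_n)$ and $(v,e_n)$ are real, they are precisely the real and imaginary parts of $(y,e_n)$, so $(y,e_n)=0$ for all $n$ forces $u=v=0$ and hence $y=0$. Thus $\{e_n\}$ is a complete orthonormal set in $\h$ with $\kappa e_n = e_n$. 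The main point to get right is the reality of the inner product on fixed vectors; once that is established, everything else reduces to assembling the decomposition $\h = \h^\kappa + i\h^\kappa$ and invoking the standard construction of a real orthonormal basis, so I expect the identity $(\kappa x,\kappa w)=\overline{(x,w)}$ and its consequence for $\h^\kappa$ to be the crux of the argument.
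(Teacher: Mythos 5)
Your proof is correct, but it takes a genuinely different route from the paper. The paper argues pointwise and inductively: it invokes Proposition \ref{pointspectrumofconjugation} to produce a single fixed unit vector $e_1$, checks that $\spn\{e_1\}$ reduces $\kappa$ (via self-adjointness, $(\kappa y,e_1)=(\kappa e_1,y)$), and then iterates on $P_n^\perp\kappa P_n^\perp$ to extract fixed vectors one at a time. You instead work globally with the fixed-point set $\h^\kappa=\{x:\kappa x=x\}$, establish the decomposition $\h=\h^\kappa+i\h^\kappa$ and the reality of the inner product on $\h^\kappa$ (via $(\kappa x,\kappa w)=\overline{(x,w)}$), and then pull back an orthonormal basis of the real Hilbert space $\h^\kappa$. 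All your steps check out: the antilinearity computations for $u$ and $v$, the self-adjointness of $\kappa$ needed for $(u,v)=\overline{(u,v)}$, and the completeness argument via real and imaginary parts of $(y,e_n)$ are each correct. Your approach buys a cleaner structural statement --- $\h$ is exhibited as the complexification of the real Hilbert space $\h^\kappa$ --- and it disposes of completeness in one stroke rather than leaving the exhaustion of $\h$ implicit in an induction; it also does not depend on the spectral computation of Proposition \ref{pointspectrumofconjugation}. The paper's approach, in turn, reuses machinery already established (the point spectrum of a conjugation and reducing subspaces) and stays closer in style to the finite-rank reduction arguments used later in Lemma \ref{reduce}.
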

\begin{proof}
By Proposition \ref{pointspectrumofconjugation} there is a normalized eigenvector $e_1$ of $\kappa$ such that $\kappa e_1 = e_1$. 
Now take a vector $y\in \{e_1\}^\perp$. 
Then also $\kappa y \in \{e_1\}^\perp$ since $(\kappa y , e_1 ) = (\kappa e_1 , y) = (e_1,y) = 0$, i.e. $\spn \{e_1\}$ is a reducing subspace for $\kappa$. 
Let $P_1$ be the orthogonal projection onto $\spn \{e_1\}$. 
Then $P_1^\perp \kappa P_1^\perp$ is a conjugation on $P_1^\perp \h$ and has a unit eigenvector $e_2 \in \{e_1\}^\perp$. 
Continuing by induction, let $P_n$ be the orthogonal projection onto $\spn \{ e_1,\ldots,e_n\}$ and take $e_{n+1}$ to be the unit eigenvector of $P_n^\perp \kappa P_n^\perp$ in $(\spn \{ e_1, \ldots, e_n\})^\perp$. 
Then the set $\{e_n\}_{n=1}^\infty$ forms the required orthonormal basis of eigenvectors of $\kappa$. 
\end{proof}

Although unitary conjugations are simply defined and have nice properties, they are not necessarily simple to operate with as illustrated by the following example. 

\begin{ex}
Let $S$ is the Beurling transform on $L^2(\Complex)$ defined as a principal value integral 
$$ Sf(z) = -\frac{1}{\pi} \lim_{\eps\to 0} \int_{|z-w|>\eps} \frac{f(w)}{(z-w)^2}\, dw_1dw_2, \quad w=w_1+iw_2, $$
and $\tau$ the complex conjugation $f\mapsto \overline f$ on $L^2(\Complex)$. Then the operator $S\tau$ is a unitary conjugation on $L^2(\Complex)$. Namely, $S$ is complex unitary and it holds $S^{-1} = \tau S \tau$. Then $(S\tau)(S\tau) = I$ and $(S\tau)^* = \tau^* S^* = \tau \tau S \tau = S\tau$. For more details on the Beurling transform and their applications, see for example \cite{AstalaEtal:Elliptic}. 
\end{ex}

Classically, an orthogonal projection is an operator that is the identity on a closed subspace and zero on the orthogonal complement of this subspace. Analogously, we define an 'antilinear orthogonal projection' to be an antilinear counterpart for orthogonal projections in the following sense.

\begin{defn}
Let $\M$ be a closed subspace of $\h$. An operator $F$ on $\h$ is said to be an antilinear orthogonal projection if its restriction to $\M$, $F|_{\M}$, is a unitary conjugation on $\M$ and $F$ is zero on $\M^\perp$. \footnote{$F$ is called a partial conjugation in \cite{garcia2}.} 
\end{defn}
Note that if $P$ is the orthogonal projection onto $\M$ and $\tau$ is some unitary conjugation on $\h$, $F$ is not given in general as $P\tau P$. 

Orthogonal projections are in one-to-one correspondence with closed subspaces of $\h$. In strong contrast to this, there is a multitude of antilinear projections for any given closed subspace $\M$ of $\h$. 

\begin{ex}
Let $\{e_n\}$ be a set of orthonormal vectors spanning a closed subspace $\M\subset\h$. Then also the set $\{e^{i\theta_n}e_n\}$, $\theta_n\in\Real$, is orthonormal and spans $\M$. The operators $F_1$ and $F_2$ defined by $F_1x = \sum_n (e_n, x) e_n$ and $F_2x = \sum_n (e^{i\theta_n}e_n, x) e^{i\theta_n}e_n$ for all $x\in\h$ are both antilinear projections onto $\M$. However, clearly $F_1\neq F_2$. 
\end{ex}

The basis dependence of antilinear projections, or unitary conjugations when $\M=\h$, might seem unappealing operator theoretically. However, it allows for defining a natural basis in the sense of Proposition \ref{eigbasisforconj}. On the other hand, there are instances where antilinearity is the key to basis independence as is illustrated by the following example. 

\begin{ex}
In so-called bipartite quantum systems, a state $\sigma\in\h \otimes \h$ can be represented as 
$$ \sigma = \sum_n v_n \otimes e_n \quad\text{with $\sum_n \|v_n\|^2<\infty$,} $$
where $\{e_n\}$ is an orthonormal basis of $\h$. Defining $L_\sigma$ as the unique antilinear (not complex linear) operator such that $L_\sigma e_n = v_n$ leads to the representation $\sigma = \sum_n L_\sigma e_n \otimes e_n$. However, this is independent of the choice of the orthonormal basis. See \cite{AV:EPR} for a detailed account. It is crucial that $L_\sigma$, the so-called relative state operator, be antilinear for this representation to be basis independent. This antilinear representation for states is advantageous when discussing quantum entanglement, for example in the study of Einstein--Podolsky--Rosen states \cite{AV:EPR} and of quantum teleportation \cite{KKAJQTelep}.
\end{ex}

Using the existence of a natural basis for a unitary conjugation in the sense of Proposition \ref{eigbasisforconj}, any two unitary conjugations are related in a simple way. 

\begin{prop}
Let $\tau$ and $\kappa$ be unitary conjugations on $\h$. Then there is a complex linear unitary operator $U$ such that $\tau = U^*\kappa U$. 
\end{prop}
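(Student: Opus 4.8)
The plan is to diagonalize both conjugations by Proposition \ref{eigbasisforconj} and then transport one eigenbasis onto the other by a complex linear unitary. Since $\kappa$ is a unitary conjugation, Proposition \ref{eigbasisforconj} furnishes an orthonormal basis $\{e_n\}_{n=1}^\infty$ of $\h$ with $\kappa e_n = e_n$; applying it to $\tau$ as well gives an orthonormal basis $\{f_n\}_{n=1}^\infty$ with $\tau f_n = f_n$. Because $\h$ is separable, both bases are indexed by the same set, so the assignment $f_n \mapsto e_n$ extends by complex linearity to a well-defined operator $U$ on $\h$. As $U$ carries one orthonormal basis onto another, it is unitary, and $U^*$ is the complex linear unitary determined by $e_n \mapsto f_n$.

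Next I would verify the identity $\tau = U^*\kappa U$, first on the chosen basis and then on all of $\h$. On basis vectors the computation is immediate: $U f_n = e_n$, hence $\kappa U f_n = \kappa e_n = e_n$, and therefore $U^*\kappa U f_n = U^* e_n = f_n = \tau f_n$. To pass from the basis to arbitrary vectors, the key observation is that both $\tau$ and $U^*\kappa U$ are \emph{antilinear}: the composite $U^*\kappa U$ is antilinear precisely because $U$ and $U^*$ are complex linear while $\kappa$ is antilinear. Since a bounded antilinear operator is determined by its values on an orthonormal basis, it suffices to note that for $x = \sum_n c_n f_n$ one has $\tau x = \sum_n \overline{c_n} f_n$ and, by the same conjugate-linear bookkeeping, $U^*\kappa U x = U^*\kappa\bigl(\sum_n c_n e_n\bigr) = U^*\bigl(\sum_n \overline{c_n} e_n\bigr) = \sum_n \overline{c_n} f_n$. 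Thus the two operators agree everywhere.

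I expect no serious obstacle here: the diagonalizability of unitary conjugations supplied by Proposition \ref{eigbasisforconj} does essentially all the work, reducing the claim to matching two orthonormal bases by a unitary. The only point that repays attention is the conjugate-linear accounting in the last step, where one must track that the scalar coefficients $c_n$ are conjugated exactly once across the composition $U^*\kappa U$. Taking $U$ complex linear keeps this accounting transparent and, in particular, guarantees that $U^*\kappa U$ is antilinear, matching $\tau$.
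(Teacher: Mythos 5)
Your proposal is correct and follows essentially the same route as the paper: diagonalize both conjugations via Proposition \ref{eigbasisforconj}, define the complex linear unitary $U$ by matching the two eigenbases, and verify the identity through the same conjugate-linear computation on coefficients (the paper checks $\kappa U x = U\tau x$, which is the same calculation rearranged). The only difference is notational, with the roles of $\{e_n\}$ and $\{f_n\}$ swapped.
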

\begin{proof}
Let $\{e_n\}$ (resp. $\{f_n\}$) be the orthonormal basis of $\h$ for which $\tau e_n = e_n$ (resp. $\kappa f_n = f_n$) for all $n\geq 1$. Define $U$ to be the complex linear operator such that $Ue_n = f_n$. Then $U$ is unitary. Moreover
$ \kappa U x = \kappa U \sum_n a_n e_n = \sum_n \overline{a_n} f_n $ 
and 
$ U\tau x = U\tau\sum_n a_n e_n = \sum_n \overline{a_n} f_n $
for all $x\in\h$. Thus $\tau = U^*\kappa U$.   
\end{proof}

Clearly, if $\tau$ and $\kappa$ are unitary conjugations, then $\kappa\tau$ is complex linear and unitary. Godi\u{c} and Lucenko have proved that the converse holds, i.e., if $U$ is a complex linear unitary operator, then there are two conjugations $\tau$ and $\kappa$ such that $U=\tau\kappa$ \cite{GL:RUO2I}. 

\section{Antilinear self-adjoint operator as a spectral integral}

In this section we develop how to represent an antilinear self-adjoint operator $A$ on $\h$ in the form 
\begin{equation}\label{intrep}
A=\int_{\sigma(A)\cap\Real_+} \lambda \,dF(\lambda) . 
\end{equation}
For comparison, recall that using the spectral resolution, a complex linear self-adjoint operator $H$ can be written as a spectral integral 
$$H=\int_{\sigma(H)} \lambda\, dE(\lambda) $$
where $E$ is a spectral measure on $\sigma(H)$. The spectral measure is defined on the $\sigma$-algebra of Borel subsets of $\sigma(H)$ and its values are orthogonal projections on $\h$. In addition, the spectral measure is required to be such that $E(\sigma(H))=I$ and $E(\bigcup_n M_n)=\sum_n E(M_n)$ whenever $\{M_n\}$ is a disjoint sequence of sets. As an analogue $F$ in \eqref{intrep} is an antilinear spectral measure to be defined below. 

To this end, let us start with the polar decomposition of $A$. Our exposition follows that of \cite[p. 1346]{herbut:basicalgebra} where general antilinear operators on finite dimensional spaces are considered. 
Recall that a self-adjoint complex linear operator $B$ on $\h$ is positive if $(Bx,x)\geq 0$ for all $x\in\h$. 

\begin{prop}\label{polardecomp}
Every self-adjoint antilinear operator $A$  on $\h$ can be written in the polar form $A=|A| \tau = \tau |A|$, where $\tau$ is a unitary conjugation and $|A|$ is defined to be the complex linear positive square root of $A^*A$. 
\end{prop}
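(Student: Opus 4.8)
The plan is to imitate the polar decomposition of a complex linear operator, the new features being that the ``phase'' $\tau$ must turn out antilinear, unitary and involutory, and that one must be careful with the functional calculus when an antilinear operator meets a complex linear one. First I would record the elementary consequences of self-adjointness. Since $A^*=A$, the product $A^*A=A^2$ is complex linear, and for every $x\in\h$,
\[
(A^2 x, x) = (A(Ax), x) = \overline{(Ax, Ax)} = \|Ax\|^2 \ge 0,
\]
using the antilinear adjoint identity $(Au,v)=\overline{(u,A^*v)}$ with $A^*=A$. Hence $A^2$ is a positive complex linear operator, and $|A| := (A^2)^{1/2}$ is well defined by the continuous functional calculus, complex linear and positive. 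The same computation gives $\||A|x\|^2 = (A^2 x, x) = \|Ax\|^2$, so that $\|Ax\| = \||A|x\|$ for all $x$, and in particular $\ker A = \ker|A|$.

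Next I would construct $\tau$ on $\mathcal R := \overline{\ran|A|} = (\ker|A|)^\perp$. Define $\tau(|A|x) = Ax$; the norm identity above shows that this is well defined and isometric on $\ran|A|$, while antilinearity of $A$ together with complex linearity of $|A|$ makes it antilinear, so it extends to an antilinear isometry of $\mathcal R$. To see that $\mathcal R$ is the right invariant subspace, I would check that $\ker A$ reduces $A$: for $w\in\ker A$ and $z\in\mathcal R$ self-adjointness gives $(Az,w) = \overline{(z,Aw)} = 0$, so $A$, and likewise $|A|$, maps $\mathcal R$ into $\mathcal R$, and $\tau$ is genuinely an antilinear map of $\mathcal R$ into itself.

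The crux is to show $\tau$ is a unitary conjugation, and here the key lemma is that $A$ commutes with $|A|$. Since $A$ commutes with $A^2$, and since for a polynomial $p$ with real coefficients antilinearity yields $A\,p(A^2) = p(A^2)\,A$, approximating $\sqrt{\,\cdot\,}$ uniformly on $\sigma(A^2)\subset[0,\infty)$ by such polynomials gives $A|A| = |A|A$. From $\tau|A| = A$ on $\mathcal R$ this commutation propagates to $\tau|A| = |A|\tau$, and then
\[
|A|^2 = A^2 = (\tau|A|)(\tau|A|) = \tau^2 |A|^2
\]
forces $\tau^2 = I$ on the dense range of $|A|^2$, hence on all of $\mathcal R$. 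Thus $\tau$ is an isometric antilinear involution of $\mathcal R$, i.e. a unitary conjugation there. Finally I would extend $\tau$ to $\h = \mathcal R \oplus \ker A$ by letting it act on $\ker A$ as any unitary conjugation (one exists: fix an orthonormal basis of $\ker A$ and take the associated conjugation, as described before Proposition \ref{eigbasisforconj}). Since both summands reduce $|A|$ and $\tau$, and $|A|$ vanishes on $\ker A$, the identities $A = \tau|A| = |A|\tau$ hold blockwise and hence on $\h$.

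The main obstacle I anticipate is the commutation $A|A|=|A|A$: because $A$ is antilinear, passing from $A\,A^2 = A^2\,A$ to a statement about $(A^2)^{1/2}$ requires the observation that $A$ intertwines $p(A^2)$ with $\overline p(A^2)$, so that only real (self-conjugate) approximating polynomials may be used in the functional calculus. Once this is in hand, the involutivity of $\tau$ via $\tau^2|A|^2=|A|^2$ and the splitting across $\ker A$ are routine.
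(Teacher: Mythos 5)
Your proof is correct, and while it builds the same operator $\tau$ as the paper does (namely $\tau(|A|x)=Ax$ on $\overline{\ran|A|}$, extended by an arbitrary conjugation on $\ker A=\ker|A|$), the way you verify the crux is genuinely different. The paper never proves the commutation $A|A|=|A|A$; instead it sets $H_2=\tau|A|\tau^*$, observes $A=H_2\tau$ and $H_2^2=H_2H_2^*=AA^*=A^*A$, and invokes uniqueness of the positive square root to conclude $H_2=|A|$, hence $A=|A|\tau=\tau|A|$; it then gets $\tau=\tau^*$ on $\overline{\ran|A|}$ from $\tau|A|=\tau^*|A|$ and deduces involutivity from the pair (unitary, self-adjoint). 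You instead prove $A|A|=|A|A$ by approximating $\sqrt{\cdot}$ on $\sigma(A^2)$ with \emph{real} polynomials (correctly noting that antilinearity forces the restriction to self-conjugate polynomials), propagate this to $\tau|A|=|A|\tau$, and read off $\tau^2=I$ from $\tau^2|A|^2=|A|^2$ on the dense range of $|A|^2$. Your commutation lemma is essentially the same mechanism the paper later isolates as Lemma \ref{commutate}, so your route makes that dependence explicit and gets involutivity directly without the square-root-uniqueness trick; the paper's route is slightly shorter and delivers the self-adjointness of $\tau$ explicitly (which is reused later, e.g.\ in showing $F(M)^*=F(M)$), whereas in your write-up self-adjointness of $\tau$ follows only implicitly from unitarity plus involutivity. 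Both arguments are complete; the only point worth making explicit in yours is that an isometric involution is automatically surjective, so $\tau$ is indeed unitary on $\overline{\ran|A|}$.
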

\begin{proof}
Define $|A|$ to be the unique positive complex linear square root of the operator $A^*A$. 
We have for all $x\in\h$ that $\||A|x\| = \|Ax\|$. 
It follows that the null spaces of $|A|$ and $A$ coincide, $\N(A) = \N(|A|)$. Since $A$ and $|A|$ are self-adjoint, it holds $\h=\N(A)\oplus\overline{\R(A)}=\N(|A|)\oplus\overline{\R(|A|)}$ and thus the closures of the ranges of $A$ and $|A|$ also coincide.  Denote $\V_1=\overline{\R(|A|)}=\overline{\R(A)}$ and $\V_2=\N(|A|)=\N(A)$. 
The isometricity of $A$ and $|A|$ implies that there is a unique anti-linear isomorphism $U_1$ on $\V_1$ such that $A=U_1|A|$. Namely, for every $x\in\V_1=\overline{\R(|A|)}$, there is $y\in\h$ such that $x=|A|y$. Define then $U_1x = Ay$. (Note that since $\N(|A|)=\N(A)$, for the inverse images we have $A^{-1}(x)=|A|^{-1}(x)$, and the definition of $U_1$ does not depend on choice of the preimage $y$. ) By the isometricity of $A$ and $|A|$, $U_1$ is unitary. 

Take then an arbitrary antilinear self-adjoint isomorphism $U_2$ on $\V_2$. This can be done by choosing an orthonormal basis for $\V_2$ and defining $U_2$ to be the conjugation with respect to that basis. 
Finally, define $\tau$ on $\h$ by $Ux=U_1x_1 + U_2x_2$, where $x_1\in\V_1$ and $x_2\in\V_2$. 

Setting $H_2=\tau |A| \tau^*$ we have the factorization $A=H_2\tau$. However, as $A$ and $H_2$ are self-adjoint, $H_2^2=H_2H_2^*=AA^* = A^*A$. Hence by the uniqueness of the square root, we have $H_2=|A|$. Thus $A=\tau |A|=|A|\tau $. 

Moreover, $A=\tau |A|=\tau^* |A|$ so that $\tau y=\tau^*y$ for all $y\in\V_1$. As $U_2$ was already chosen to be self-adjoint on $\V_2$, the anti-unitary $\tau = U_1 \oplus U_2$ is also self-adjoint, thus a unitary conjugation. 
\end{proof}

Remark that in the proof above the self-adjointness of $A$ was needed only in proving that $\tau$ is self-adjoint on $\V_1$. Otherwise, the assumption $AA^*=A^*A$ would suffice. Note also that $|A|$ is unique and $\tau$ is non-unique only on $\N(A)$. 

All the necessary information about the spectrum of a self-adjoint antilinear operator $A$ is given by $|A|$. Indeed, $\sigma(|A|)$ lies on the non-negative real line $\Real_+$. We have that $r\in\sigma(|A|)$ if and only if $r^2 \in \sigma(|A|^2) = \sigma(A^2)$ and that the latter is equivalent with $r \in \sigma(A)$ by \cite[Proposition 2.15]{reallinearoperator}. In addition, it is known that the spectrum of an antilinear operator is circularly symmetric with respect to the origin. The fact that the spectra of $A$ and $|A|$ are in this manner closely related leads to the following definition. 

\begin{defn}
Define the antilinear spectral measure $F$ for an antilinear self-adjoint operator $A$ on $\h$ by 
\begin{equation*}
F(M)=E(M)\tau 
\end{equation*}
for every Borel subset $M$ of $\sigma(A)\cap \{\lambda\geq 0\}=\sigma(|A|)$. Here $E$ is the spectral measure for $|A|=(A^*A)^{1/2}$ and $\tau$ is the unitary conjugation given by Proposition \ref{polardecomp}. Denote for further convenience by $\Sigma$ the $\sigma$-algebra of Borel subsets of $\sigma(A) \cap \{\lambda\geq 0\} = \sigma(|A|)$. 
\end{defn}

For $F$ to be appropriate for its role, it is crucial that $E(M)\tau = \tau E(M)$ for all $M\in\Sigma$. Even though $\tau$ is antilinear, standard textbook methods (like in \cite[Theorem 10.2]{conway:oper} or in \cite[Theorem 40.2]{halmos:hilbert}) can be used to prove this. It should be observed, however, that properties used in proving the following lemma require that the complex linear operator $H$ be self-adjoint, not normal. 

\begin{lemma}\label{commutate}
Let $\tau$ be antilinear and $H=\int \lambda\, dE(\lambda)$ be complex linear and self-adjoint where $E$ is the spectral measure for $H$. If $\tau H = H \tau$, then $E(M)\tau = \tau E(M)$ for all $M\in\Sigma$.
\end{lemma}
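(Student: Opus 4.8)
The plan is to transfer the commutation relation from $H$ itself, up through the functional calculus, to the spectral projections $E(M)=\chi_M(H)$, while carefully tracking the complex conjugation that the antilinearity of $\tau$ introduces at every stage. First I would record the elementary consequence of $\tau H = H\tau$: by induction $\tau H^k = H^k\tau$ for every $k\ge 0$, and since $\tau$ is antilinear, for a polynomial $p(\lambda)=\sum_k a_k\lambda^k$ one obtains
\begin{equation*}
\tau\, p(H) = \sum_k \overline{a_k}\, H^k\, \tau .
\end{equation*}
Here is the one place where self-adjointness of $H$ is indispensable: because $\sigma(H)\subset\Real$, on the spectrum one has $\sum_k \overline{a_k}\lambda^k = \overline{p(\lambda)}$, so the operator on the right is exactly $\overline{p}(H)$, where $\overline{p}$ denotes the function $\lambda\mapsto\overline{p(\lambda)}$. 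Thus $\tau\,p(H)=\overline{p}(H)\,\tau$ for all polynomials $p$. For a merely normal $H$ the spectrum would be genuinely complex and this identification would fail, which is the content of the remark preceding the lemma.

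Next I would upgrade this from polynomials to continuous functions. Since $\sigma(H)$ is a compact subset of $\Real$, the Stone--Weierstrass theorem produces polynomials $p_n\to f$ uniformly on $\sigma(H)$ for any $f\in C(\sigma(H))$, and then $\overline{p_n}\to\overline{f}$ uniformly as well. The continuous functional calculus yields $\|p_n(H)-f(H)\|\to 0$ and $\|\overline{p_n}(H)-\overline{f}(H)\|\to 0$; since $\tau$ is bounded, passing to the limit in $\tau\,p_n(H)=\overline{p_n}(H)\,\tau$ gives
\begin{equation*}
\tau\, f(H)=\overline{f}(H)\,\tau \qquad\text{for all } f\in C(\sigma(H)).
\end{equation*}

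The final step is to pass from continuous functions to bounded Borel functions by a monotone--class argument. Let $\mathcal{C}$ be the family of bounded Borel functions $g$ on $\sigma(H)$ satisfying $\tau\,g(H)=\overline{g}(H)\,\tau$. Using antilinearity one checks that $\mathcal{C}$ is a complex vector space (since $\overline{ag_1+bg_2}=\bar a\,\overline{g_1}+\bar b\,\overline{g_2}$), that it contains $C(\sigma(H))$ by the previous step, and that it is closed under bounded pointwise limits: if $g_n\to g$ pointwise with $\sup_n\|g_n\|_\infty<\infty$, then $g_n(H)\to g(H)$ and $\overline{g_n}(H)\to\overline{g}(H)$ in the strong operator topology by dominated convergence against the finite measures $(E(\cdot)x,x)$, and the continuity of $\tau$ lets one pass to the limit. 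The functional form of the monotone class theorem then forces $\mathcal{C}$ to contain every bounded Borel function. In particular $\chi_M\in\mathcal{C}$ for each $M\in\Sigma$, and since $\chi_M$ is real-valued we have $\overline{\chi_M}=\chi_M$, whence
\begin{equation*}
\tau\, E(M)=\tau\,\chi_M(H)=\overline{\chi_M}(H)\,\tau=E(M)\,\tau .
\end{equation*}

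The main obstacle to watch is the bookkeeping of the conjugation: at every stage the antilinearity of $\tau$ converts $g(H)$ into $\overline{g}(H)$ rather than $g(H)$, and the whole argument closes only because the real spectrum of the self-adjoint operator $H$ renders $\overline{\chi_M}=\chi_M$ for the real-valued indicator functions that are ultimately needed. I expect the verification that $\mathcal{C}$ is a complex vector space and is stable under bounded pointwise limits to be entirely routine once the antilinear identity $\tau\,f(H)=\overline{f}(H)\,\tau$ is in hand.
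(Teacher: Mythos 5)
Your proof is correct and follows essentially the same route as the paper: both transfer the commutation relation from polynomials in $H$ to the spectral projections, the key point in each case being that the real spectrum of the self-adjoint $H$ neutralizes the conjugation that the antilinearity of $\tau$ introduces (the paper restricts to real polynomials, you track $\overline{p}$ and then use $\overline{\chi_M}=\chi_M$). The only difference is technical bookkeeping: the paper passes from polynomial moments to $E(M)$ via uniqueness of the scalar measures $(E(\cdot)x,\tau^*y)$, whereas you go through the bounded Borel functional calculus with a monotone class argument; both are the standard textbook devices the paper alludes to.
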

\begin{proof}
We have $p(H)\tau = \tau p(H)$ for every real polynomial $p$. Then for all $x,y\in\h$
\begin{equation}
\int p(\lambda)\, d(E(\lambda)x, \tau^* y) = (p(A)x,\tau^* y) = \overline{(p(A)\tau x, y)} = \overline{\int p(\lambda)\, d(E(\lambda) \tau x , y)}, 
\end{equation}
from which 
we can infer that
\begin{equation}
(\tau E(M) x , y) = \overline{(E(M) x , \tau^* y)} = (E(M) \tau x,y ). 
\end{equation}
Thus $E(M)$ and $\tau$ commute. 
\end{proof}

From this it follows that an antilinear self-adjoint operator may be represented in the form \eqref{intrep}. Analogously to classical spectral measures, the antilinear spectral measure satisfies the following: 
\begin{enumerate}[(i)]
\item The values of $F$ are antilinear projections, i.e. 
$$ F(M)^2=E(M) \ \text{ and }\ F(M)^* =(E(M)\tau)^* = E(M)\tau = F(M) .$$
\item It holds $F(\sigma(A)) = \tau$ and
\item $F(\bigcup_n M_n ) = \sum_n F(M_n)$ for any disjoint sequence of sets $\{M_n\}$. 
\end{enumerate}

\section{Weyl--von Neumann theorem for antilinear self-adjoint operators}

In this section we aim to prove an analogue of the Weyl--von Neumann theorem. Naturally, the question arises why the polar decomposition of Proposition \ref{polardecomp} with the classical Weyl--von Neumann theorem would not provide the wanted result directly. By this we mean that we certainly may write $A=|A|\tau$ and use the representation $|A|=D+K$ of the Weyl--von Neumann theorem. This provides us with a compact operator $K$ and an operator $D$ diagonal with respect to an orthonormal basis $\{e_n\}$ of $\h$. Then clearly $A=D\tau + K\tau$ where $K\tau$ is compact. However, we cannot claim that $D\tau$ is diagonal. This would be the case only if the orthonormal basis diagonalizing $\tau$ would happen to be the same as the one diagonalizing $D$. 

\subsection{The main theorem}
Recall the notion of diagonalizability. It is in effect unitary diagonalizability, and as such more stringent than other, more general definitions of diagonalizability for Hilbert space operators, cf. \cite{Herrero:triangular}.  

\begin{defn}
An operator $A$ on $\h$ is diagonalizable if there exists an orthonormal basis $\{e_n\}$ of $\h$ such that $Ae_n=a_ne_n$ for all $n$ for some complex numbers $a_n$, i.e. if there is an orthonormal set of eigenvectors spanning $\h$. Then $A$ is said to be diagonal with respect to $\{e_n\}$.  
\end{defn}

Let us glance at the finite dimensional diagonalizability first. It is interesting in its own right due to its frequent occurrence in applications. In addition, the proof of the main theorem relies on it. 
\begin{prop}\label{usetakagi}
Let $\h$ be a finite dimensional Hilbert space and $A\in\B(\h)$ an antilinear self-adjoint operator on $\h$. Then $A$ is unitarily diagonalizable. 
\end{prop}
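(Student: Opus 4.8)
The plan is to pass to a concrete matrix representation in which the self-adjointness of $A$ becomes complex symmetry of a matrix, and then to apply Takagi's factorization to extract the orthonormal eigenbasis. First I would fix any orthonormal basis $\{f_n\}_{n=1}^N$ of the finite dimensional space $\h$ and record the action of $A$ by $Af_n=\sum_m a_{mn}f_m$. Since $A$ is antilinear, for $x=\sum_n x_nf_n$ this yields $Ax=\sum_m\bigl(\sum_n a_{mn}\overline{x_n}\bigr)f_m$, so that $A$ is represented by the map $x\mapsto M\overline{x}$ with $M=(a_{mn})$.

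Next I would translate self-adjointness into a property of $M$. Using the defining identity $(Ax,y)=\overline{(x,A^*y)}=\overline{(x,Ay)}$ and expanding both sides in the basis $\{f_n\}$, a short computation forces $a_{mn}=a_{nm}$; that is, $M$ is a complex symmetric matrix, $M^T=M$. This is precisely the matrix incarnation of the link between antilinear self-adjointness and complex symmetry. I would then invoke Takagi's factorization for complex symmetric matrices: there exist a unitary $U$ and a diagonal matrix $D=\mathrm{diag}(d_1,\dots,d_N)$ with $d_k\geq 0$ such that $M=UDU^T$. Writing $u_k$ for the $k$-th column of $U$, orthonormality of $\{u_k\}$ is immediate from unitarity of $U$, while a direct computation gives $U^T\overline{u_k}=e_k$ and hence $M\overline{u_k}=UDe_k=d_ku_k$. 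Interpreting each $u_k$ as the vector $\sum_i(u_k)_if_i\in\h$, this reads exactly $Au_k=d_ku_k$. Thus $\{u_k\}$ is an orthonormal basis of $\h$ consisting of eigenvectors of $A$ with non-negative eigenvalues $d_k$, which is the required unitary diagonalization.

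The main obstacle is Takagi's factorization itself, which I would either cite or prove. A self-contained alternative that avoids it, using only results already at hand, is the following. Since $A=A^*$, the operator $A^2=A^*A=|A|^2$ is complex linear, positive, and therefore unitarily diagonalizable; moreover $A$ commutes with $A^2$ and so preserves each eigenspace $V_\mu$ of $A^2$. On $V_0$ one has $\|Ax\|^2=(A^2x,x)=0$, so $A=0$ there and any orthonormal basis gives eigenvalue $0$. For $\mu>0$, the restriction $\mu^{-1/2}A|_{V_\mu}$ is an antilinear self-adjoint involution, hence a unitary conjugation, to which Proposition \ref{eigbasisforconj} applies to furnish an orthonormal basis of $V_\mu$ with $Ax=\sqrt{\mu}\,x$. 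Collecting these bases over all $\mu$ produces the orthonormal eigenbasis of $A$, completing the proof.
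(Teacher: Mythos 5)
Your first argument is essentially the paper's own proof: represent $A$ as $x\mapsto M\overline{x}$, observe that self-adjointness of the antilinear $A$ forces $M^T=M$, and apply Takagi's factorization; you merely make explicit the step the paper leaves implicit, namely that the columns of $U$ in $M=UDU^T$ satisfy $M\overline{u_k}=d_ku_k$ and hence form the orthonormal eigenbasis. Your second, self-contained argument is a genuinely different route and is also correct: since $A=A^*$, the complex linear operator $A^2=A^*A$ is positive (indeed $(A^*Ax,x)=\overline{(Ax,Ax)}=\|Ax\|^2$), its eigenspaces $V_\mu$ reduce $A$ because $A$ commutes with $A^2$ and the eigenvalues $\mu$ are real, $A$ vanishes on $V_0$, and for $\mu>0$ the restriction $\mu^{-1/2}A|_{V_\mu}$ is an antilinear self-adjoint involution, hence a unitary conjugation, so Proposition \ref{eigbasisforconj} supplies an orthonormal eigenbasis of $V_\mu$. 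This alternative buys independence from Takagi's theorem (in effect it reproves the finite dimensional Takagi factorization from the paper's Proposition \ref{eigbasisforconj}) and is structurally closer to the polar-decomposition viewpoint of Section 3, whereas the paper's route is shorter at the cost of importing a matrix-analysis result.
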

\begin{proof}
We can factor $A=A_\#\tau$ where $\tau$ is represented by complex conjugation on $\Complex^n$. Then $(A_\#\tau)^*=\tau^*A_\#^* = A_\#^T \tau$, that is $A_\#$ is complex symmetric. By the Takagi factorization (cf. e.g. \cite{hornjohnson1}) there is a unitary matrix $U$ such that $UA_\#U^T = D$ where $D$ is diagonal. Thus $D\tau = UAU^T\tau = U D\tau U^*$, i.e. $A$ is diagonalizable since $D\tau$ is diagonal with respect to the standard basis.  
\end{proof}

Diagonalizability in the antilinear case is not a trivial matter, though. Recall that almost all complex matrices are diagonalizable in the sense that the probability is one for a randomly picked complex linear operator on a finite dimensional Hilbert space to be diagonalizable.  However, diagonalizable antilinear operators are a lot more scarce. The probability of a randomly picked antilinear operator on an $n$-dimensional Hilbert space to be diagonalizable is $2^{-n(n-1)/2}$, see \cite{HuhtanenPeramaki} for details.

Analogously to the complex linear case, we define singular values and Schatten $p$-class operators as follows. 
\begin{defn}
Define the singular values $s_n(A)$, $n=1,2,\ldots$, of a compact antilinear operator $A$ as the eigenvalues of the complex linear positive operator $|A| = (A^*A)^{1/2}$ in non-increasing order of magnitude. We say that $A$ is in Schatten $p$-class $\B_p(\h)$, $1\leq p < \infty$, if 
$$ \|A\|_p = \left( \sum_n s_n(A)^p \right)^{1/p} < \infty. $$ 
\end{defn}

Note that if we factor a compact antilinear operator $A$ as $A = A_\# \tau$ with $\tau$ a unitary conjugation and $A_\#=A\tau$, then $s_n(A) = s_n(A_\#)$. This follows from the fact $\sigma(|A_\#|) = \sigma(|A|)$ which holds since $|A_\#| = (\tau^*A^*A\tau)^{1/2}$ and $\sigma(|A_\#|)$ is real. 

From this connection between $A$ and $A_\#$, the following is immediate. The rank of an antilinear operator is defined to be the dimension of its range. 

\begin{lemma}\label{lem:schatten}
For an antilinear operator $A$ of rank at most $n$, there holds $\|A\|_p \leq n^{1/p} \|A\|$. 
\end{lemma}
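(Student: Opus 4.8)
The plan is to transfer the statement to the complex linear setting, where it is elementary, by using the factorization $A=A_\#\tau$ introduced in the remark preceding the lemma. First I would fix a unitary conjugation $\tau$ and set $A_\#=A\tau$, so that $A_\#$ is complex linear. Since $\tau$ is bijective, $\ran(A_\#)=\ran(A\tau)=\ran(A)$, so $A_\#$ has rank at most $n$ as well; and by the cited remark $s_k(A)=s_k(A_\#)$ for every $k$, whence $\|A\|_p=\|A_\#\|_p$.

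Next I would observe that the operator norm coincides with the largest singular value in both the antilinear and the complex linear case. Indeed, from the identity $\||A|x\|=\|Ax\|$ established in the proof of Proposition \ref{polardecomp}, one has $\|A\|=\||A|\|=s_1(A)$, since $|A|$ is positive and its norm equals its largest eigenvalue; the same reasoning gives $\|A_\#\|=s_1(A_\#)$. Combined with $s_1(A)=s_1(A_\#)$ this yields $\|A\|=\|A_\#\|$.

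It then remains to prove the complex linear estimate $\|A_\#\|_p\leq n^{1/p}\|A_\#\|$. Since $A_\#$ has rank at most $n$, at most $n$ of its singular values are nonzero, so $\|A_\#\|_p=\bigl(\sum_{k=1}^n s_k(A_\#)^p\bigr)^{1/p}$. Bounding each $s_k(A_\#)$ by $s_1(A_\#)=\|A_\#\|$ gives $\|A_\#\|_p\leq\bigl(n\,\|A_\#\|^p\bigr)^{1/p}=n^{1/p}\|A_\#\|$. Putting the pieces together, $\|A\|_p=\|A_\#\|_p\leq n^{1/p}\|A_\#\|=n^{1/p}\|A\|$.

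The computation is routine; the only points that require a moment's care are the bookkeeping that the factorization preserves both the rank and the singular values, and the identification of the operator norm with the top singular value in the antilinear case. Neither presents a genuine obstacle once the remark preceding the lemma is in hand, so I do not expect any real difficulty here.
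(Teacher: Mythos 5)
Your argument is correct and matches the paper's intended route: the paper gives no written proof, stating only that the lemma is ``immediate'' from the connection $s_k(A)=s_k(A_\#)$, and you have simply filled in the routine details (rank and singular values preserved under the factorization, operator norm equal to the top singular value, and the elementary bound for a rank-$n$ complex linear operator). The only minor remark is that the detour through $A_\#$ is not strictly needed, since the singular values of $A$ are already defined via $|A|=(A^*A)^{1/2}$ and the same counting argument applies directly; but this is a matter of taste, not a gap.
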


Now we can state the analogue of the Weyl--von Neumann theorem. 

\begin{thm}\label{WeylvonN}
Let $A$ be a self-adjoint antilinear operator on $\h$, $\eps >0$ and $1<p<\infty$. Then there is a diagonalizable self-adjoint antilinear operator $D$ such that $A-D$ is compact and $\|A-D\|_p<\eps$. 
\end{thm}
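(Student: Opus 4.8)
The plan is to reduce the theorem to a single-step approximation lemma and then to exhaust $\h$ by an inductive construction, carried out so that the perturbation stays antilinear self-adjoint and each finite-dimensional block can be diagonalized by Proposition~\ref{usetakagi}. As already noted at the start of this section, neither the naive spectral approximation $A\approx\sum_j\lambda_j E(\Delta_j)\tau$ nor the combination of the polar decomposition with the classical theorem works directly: the former produces a difference that is small in norm but not compact, and the latter need not be diagonal. First I would fix the polar form $A=\tau|A|=|A|\tau$ of Proposition~\ref{polardecomp}, write $H=|A|$ with spectral measure $E$, and recall from Lemma~\ref{commutate} that $\tau$ commutes with every $E(M)$, $M\in\Sigma$.

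The heart of the argument is the following single-step lemma: given a finite-dimensional subspace $\M_0$ and $\eps'>0$, there are a finite-dimensional subspace $\M\supseteq\M_0$ and an antilinear self-adjoint operator $F$ with $\|F\|_p<\eps'$ such that $\M$ reduces $A-F$. To prove it, I would first enlarge $\M_0$ to $\M_0+\tau\M_0$ so that it is $\tau$-invariant, partition $\sigma(H)\subseteq[0,\|A\|]$ into Borel sets $\Delta_1,\dots,\Delta_N$ of diameter $<\delta$, choose $\lambda_j\in\Delta_j$, and set
\begin{equation*}
\M=\sum_{j=1}^N E(\Delta_j)\M_0 .
\end{equation*}
Since $\tau$ commutes with each $E(\Delta_j)$, the subspace $\M$ is again $\tau$-reducing, so the orthogonal projection $P$ onto $\M$ commutes with $\tau$. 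Writing $F_0=PH(I-P)+(I-P)HP$ for the off-diagonal part of $H$ and setting $F=\tau F_0$, one checks that $F$ is antilinear and, because $\tau$ commutes with $P$ and with $H$, self-adjoint; moreover $A-F=\tau(H-F_0)=\tau\bigl(PHP+(I-P)H(I-P)\bigr)$ visibly reduces $\M$, and its restriction to the finite-dimensional $\M$ is antilinear self-adjoint, hence diagonalizable by Proposition~\ref{usetakagi}.

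The decisive estimate --- and the step I expect to be the main obstacle --- is bounding $\|F\|_p$. On each block $\M_j=E(\Delta_j)\M_0$ one has $\|(H-\lambda_j)v\|\le\delta\|v\|$, and since the ranges $(I-P)H\M_j$ lie in the mutually orthogonal subspaces $E(\Delta_j)\h$ one gets $\|(I-P)HP\|\le\delta$ and hence $\|F\|=\|F_0\|\le\delta$. The rank of $F$ is at most $2\dim\M\le 2N\dim\M_0$, and since $H$ is supported on an interval of length $\|A\|$ we may take $N\le\|A\|/\delta+1$. Lemma~\ref{lem:schatten}, applied to the antilinear operator $F$ (note $\|F\|_p=\|F_0\|_p$ as $\tau$ is antiunitary), then gives
\begin{equation*}
\|F\|_p\le(2\dim\M)^{1/p}\,\|F\|\le C\,(\dim\M_0)^{1/p}\,\|A\|^{1/p}\,\delta^{\,1-1/p}.
\end{equation*}
Because $p>1$ the exponent $1-1/p$ is positive, so the bound tends to $0$ as $\delta\to 0$ despite the growth of $\dim\M$; choosing $\delta$ small enough yields $\|F\|_p<\eps'$. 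This is exactly where the hypothesis $1<p<\infty$ enters and is the crux of the whole argument.

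Finally I would iterate. Fix a dense sequence $\{y_k\}$ in $\h$, set $\M^{(0)}=\{0\}$ and $A_0=A$, and at step $k$ apply the single-step lemma to the antilinear self-adjoint operator $A_{k-1}$ on the invariant subspace $\bigl(\M^{(k-1)}\bigr)^\perp$, after enlarging the starting subspace to contain the component of $y_k$ in $\bigl(\M^{(k-1)}\bigr)^\perp$, with tolerance $\eps/2^k$. Writing $A_k=A_{k-1}-F_k$, the perturbation $F_k$ is supported on $\bigl(\M^{(k-1)}\bigr)^\perp$, so it leaves the already-diagonalized part on $\M^{(k-1)}$ untouched, while $\M^{(k)}$ reduces $A_k$ and $A_k|_{\M^{(k)}}$ is diagonalizable. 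The subspaces $\M^{(k)}$ increase to a dense subspace, and $K=\sum_k F_k$ converges in $\|\cdot\|_p$ since $\sum_k\|F_k\|_p<\eps$; here subadditivity of the antilinear Schatten norm follows by fixing one conjugation $J$ and using $\|T\|_p=\|TJ\|_p$ together with the additivity of $T\mapsto TJ$. Thus $K$ is compact with $\|K\|_p<\eps$, and $D=A-K$ is antilinear self-adjoint and block-diagonal with respect to the finite-dimensional increments $\M^{(k)}\ominus\M^{(k-1)}$, each block being diagonalizable by Proposition~\ref{usetakagi}; collecting the eigenbases of the blocks exhibits $D$ as diagonalizable, which completes the proof.
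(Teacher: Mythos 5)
Your proposal is correct and follows essentially the same route as the paper: a one-step reduction lemma built from spectral projections of $|A|$ applied to a $\tau$-invariant finite-dimensional seed, the rank-times-norm estimate of Lemma~\ref{lem:schatten} with exponent $1-1/p>0$ (which is exactly where the paper also uses $p>1$), an inductive exhaustion of $\h$, and diagonalization of the finite-dimensional blocks via Proposition~\ref{usetakagi}. The only cosmetic difference is that you seed each step with a $\tau$-invariant subspace $\M_0+\tau\M_0$ rather than a single vector $f$ with $\tau f=f$ as in Lemma~\ref{reduce}.
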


The steps in proving that $A$ is the sum of a diagonalizable operator $D$ and a Schatten $p$-class operator follow those taken in \cite[pp. 212--213]{conway:oper}; see also \cite[Chapter X]{katopert}, and \cite{kuroda:weylvonn} for extension from $p=2$ to $1<p<\infty$. A modification is needed: in the following proposition we have to restrict that $f=\tau f\in\h$. 

\begin{lemma}\label{reduce}
Let $A=|A|\tau$ be a self-adjoint antilinear operator and $\tau f=f \in\h$. Then for any $\eps>0$ there is a finite rank projection $P$ and a self-adjoint antilinear operator $K\in\B_p(\h)$, $1<p<\infty$, such that $f\in P\h$, $\|K\|_p < \eps$, and $A+K$ is reduced by $P$. In addition, $P\tau = \tau P$. 
\end{lemma}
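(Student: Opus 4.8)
The plan is to take $P$ to be the orthogonal projection onto a finite-dimensional spectral subspace of $|A|$ adapted to $f$, and to let $K$ be the negative of the off-diagonal part of $A$ relative to the splitting $\h = P\h \oplus (I-P)\h$. Concretely, writing $P^\perp = I - P$, I would set $K = -(PAP^\perp + P^\perp A P)$. Since orthogonal projections are complex linear and $A$ is antilinear, $K$ is antilinear; a direct computation using $A^* = A$ and $P^* = P$ gives $(P^\perp A P)^* = P A P^\perp$, so $K^* = K$, and $A + K = PAP + P^\perp A P^\perp$ is block diagonal, hence reduced by $P$. As $P$ will be finite rank, $K$ has rank at most $2\,\rank P$, so $K \in \B_p(\h)$ automatically; the whole problem is to choose $P$ so that $f \in P\h$, so that $P\tau = \tau P$, and so that $\|K\|_p$ is as small as we wish.

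To build $P$, fix a Borel partition $\Delta_1, \ldots, \Delta_N$ of $\sigma(|A|) \subseteq [0, \|A\|]$ into pieces of diameter less than $\delta$, and put $f_j = E(\Delta_j) f$, where $E$ is the spectral measure of $|A|$. Discarding the vanishing ones, the nonzero $f_j$ are mutually orthogonal and $f = \sum_j f_j$, so $f$ lies in $\M := \spn\{f_j\}$; let $P$ be the projection onto $\M$. The key point furnished by the hypothesis $\tau f = f$ is the $\tau$-invariance of this construction: by Proposition \ref{polardecomp} we have $\tau |A| = |A|\tau$, so Lemma \ref{commutate} gives $E(\Delta_j)\tau = \tau E(\Delta_j)$, whence $\tau f_j = \tau E(\Delta_j) f = E(\Delta_j)\tau f = E(\Delta_j) f = f_j$. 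Thus each $f_j$ is fixed by $\tau$, and since $\tau$ is antilinear, $\tau(\sum_j c_j f_j) = \sum_j \overline{c_j} f_j \in \M$; that is, $\tau \M = \M$. Because $\tau$ is a self-adjoint isometry, $\tau \M = \M$ forces $\tau \M^\perp = \M^\perp$, and therefore $P\tau = \tau P$, as required.

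It then remains to make $\|K\|_p$ small by refining the partition. Normalizing $e_j = f_j/\|f_j\|$ gives $\tau e_j = e_j$, hence $A e_j = |A|\tau e_j = |A| e_j$, and choosing $\lambda_j \in \Delta_j$ the spectral calculus yields $\|(A - \lambda_j) e_j\| = \||A| e_j - \lambda_j e_j\| \le \delta$. Since $e_j \in \M$, the off-diagonal block $T := P^\perp A P$ satisfies $\|T e_j\| = \|P^\perp(A - \lambda_j) e_j\| \le \delta$. For the antilinear operator $T$ one checks $(T^*T e_j, e_j) = \|T e_j\|^2$, so $\sum_k s_k(T)^2 = \sum_j \|T e_j\|^2 \le N\delta^2$, while $T$ has rank at most $N$. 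The estimate $\|K\|_p \le 2\|T\|_p$ then follows from $PAP^\perp = T^*$; and taking $\delta \approx \|A\|/N$ together with the power-mean (Hölder) inequality $\sum_{k=1}^N s_k^p \le N^{1 - p/2}\,(\sum_k s_k^2)^{p/2}$ for $1 < p \le 2$ (and monotonicity $\|T\|_p \le \|T\|_2$ for $p \ge 2$), one finds $\|T\|_p \le C_p \|A\|\, N^{-\min(1/2,\, 1 - 1/p)} \to 0$ as $N \to \infty$. Choosing $N$ large enough gives $\|K\|_p < \eps$.

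The delicate point — and the reason for the restriction $1 < p < \infty$ — is precisely this last estimate: the off-diagonal block has about $N$ singular values whose squares sum to $O(N\delta^2) = O(\|A\|^2/N)$, so the Hilbert--Schmidt norm alone only controls $p \ge 2$, and for $1 < p < 2$ one must exploit that spreading the perturbation over $N$ dimensions makes $\|T\|_p$ decay like $N^{1/p - 1}$. This decay is lost exactly at $p = 1$, consistent with the failure of the trace-class analogue. The genuinely new ingredient compared with the classical Weyl--von Neumann argument is the bookkeeping that keeps $P$ commuting with $\tau$; everything else runs parallel to \cite[pp. 212--213]{conway:oper}, so the antilinear self-adjointness of $K$ and the reduction of $A+K$ by $P$ come for free once $K$ is taken to be the off-diagonal part of $A$.
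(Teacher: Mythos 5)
Your proposal is correct and follows essentially the same route as the paper: the same spectral-partition construction of $P$ from the vectors $E(\Delta_j)f$, the same choice $K=-(PAP^\perp+P^\perp AP)$, and the same use of $E(M)\tau=\tau E(M)$ (via Lemma \ref{commutate}) to obtain $P\tau=\tau P$. The only variation is in the Schatten estimate: the paper first proves the operator-norm bound $\|P^\perp AP\|\leq (b-a)/n$ using the mutual orthogonality of the images $P^\perp Ag_k$ and then invokes the rank bound of Lemma \ref{lem:schatten}, whereas you bound the Hilbert--Schmidt norm directly and interpolate with H\"older; both give the same $O(n^{1/p-1})$ decay, valid exactly for $p>1$.
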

\begin{proof}
Factor $A$ in the form $A=\tau |A| = |A|\tau$ as in Lemma \ref{polardecomp}. 
The self-adjoint operator $|A|$ has a spectral decomposition $|A|=\int \lambda\, dE(\lambda)$ with respect to the spectral measure $E$.
Assume $\sigma(|A|)\subset [a,b]$, where $[a,b]$ is an interval in the non-negative real line. 
Partition $[a,b]$ into $n$ equal subintervals $M_1,\ldots,M_n$ each of length $\frac{b-a}{n}$ and let $\lambda_k$ be the midpoint of the interval $M_k$. 
Set $f_k = E(M_k)\tau f = E(M_k)f$ and $g_k=f_k/\|f_k\|$ if $f_k\neq 0$ or $g_k=0$ otherwise. 
Denote for convenience $c_k = 1/\|f_k\|$. 
Thus we have that $g_k\in E(M_k)\tau \h$, whence $g_j\perp g_k$ for $j\neq k$. 
Then 
\begin{equation*}
\|(A-\lambda)g_k\| = \| |A|\tau c_k E(M_k) f - \lambda c_k E(M_k) f \| = \| (|A| - \lambda ) g_k \| 
 \leq  \frac{b-a}{n}. 
\end{equation*}
Denoting by $P$ the orthogonal projection onto $\spn \{g_k\}_{k=1}^n = \spn \{f_k\}_{k=1}^n$, we get 
\begin{equation}
\|P^\perp Ag_k \| = \| P^\perp (A-\lambda) g_k \| \leq \frac{b-a}{n} .
\end{equation}
It holds $Ag_k \in AE(M_k)\h = E(M_k)A\h \subset E(M_k)\h$ so that $Ag_k \perp g_j$ for $k\neq j$. Hence
$$P^\perp A g_k = A g_k - \sum_j (Ag_k,g_j) g_j = Ag_k - (Ag_k,g_k) g_k \in E(M_k)\h $$
so that also $P^\perp Ag_k \perp P^\perp Ag_j$ for $k\neq j$. 
Using this orthogonality, we have 
\begin{eqnarray*}
\|P^\perp A P h\|^2 &=& \left\| \sum_k (g_k, h) P^\perp A g_k \right\|^2 = \sum_k |(g_k, h)|^2 \|P^\perp A g_k \|^2 \\
&\leq & \|h\|^2 \left( \frac{b-a}{n} \right)^2
\end{eqnarray*}
for all $h\in\h$. Thus $\|P^\perp A P\| \leq (b-a)/n$ with $P^\perp A P$ having rank at most $n$. By Lemma \ref{lem:schatten}, $\|P^\perp A P\|_p \leq (b-a) n^{-1/q}$ with $\frac 1p + \frac 1q = 1$. 

Define $B=PAP + P^\perp AP^\perp$ and $K=-P^\perp AP - P AP^\perp$. Then $B$ and $K$ are self-adjoint antilinear operators and $A=B-K$. The operator $B$ is reduced by $P$, $K$ has finite rank, and $\| K \|_p\leq 2 (b-a)/n^{1/q}$ which can be made arbitrarily small with a suitable choice of $n$. 

Finally,
\begin{eqnarray*}
\tau P x &=& \tau \sum_k (x, c_kE(M_k)f ) c_kE(M_k)f =  \sum_k (c_kE(M_k)f , x) c_kE(M_k) \tau f \\
&=& \sum_k (\tau x, \tau c_kE(M_k)f ) c_kE(M_k) f = P\tau x
\end{eqnarray*}
for all $x\in\h$ so that $P\tau = \tau P$. 
\end{proof}

Using the previous lemma, the Weyl--von Neumann theorem for antilinear self-adjoint operators can be proven. 

\begin{proof}[Proof of Theorem \ref{WeylvonN}]
Let $\{e_n\}$ be an orthonormal basis of $\h$ such that $\tau e_n = e_n$ for all $n$. Apply the preceding Lemma \ref{reduce} with $f=e_1$ to get a finite rank projection $P_1$ and a self-adjoint operator $K_1\in\B_p(\h)$ with $\|K_1\|_p<\eps/2$ such that $A+K_1$ is reduced by $P_1$ and $e_1\in P_1\h$. 
Apply the lemma again to $(A+K_1)|_{(P_1^\h)^\perp}$ with the vector $f = P_1^\perp e_2 = \tau f$ to get a self-adjoint operator $K_2\in\B(P_1^\perp\h)$ and a projection $P_2$ such that $P_1^\perp e_2 \in P_2\h$, $\|K_2\|_p<\eps/2^2$ and $A+K_1+K_2$ is reduced by $P_2$. 
Extend $K_2$ to all of $\h$ by $K_2y=0$ for all $y\in P_1\h$. Note that $e_1,e_2\in (P_1+P_2)\h$. 

By induction we get a sequence of finite rank projections $\{P_n\}$ and a sequence of self-adjoint operators $\{K_n\}$ such that 
\begin{enumerate}
\item $\|K_n\|_p<\eps/2^n$
\item $P_jP_k=0$ for $j\neq k$
\item $e_n\in (\sum_1^n P_k)\h$
\item $A+K_1+\cdots +K_n$ is reduced by $(P_1+\cdots+ P_n)$
\item $K_n(P_1+\cdots +P_{n-1})=0$. 
\end{enumerate}

Set $K=\sum_n K_n$, $D=A+K$ and $D_n=D|_{P_n\h}$. Then $\|K\|_p<\eps$ by property (i) and $D$ is self-adjoint. Properties (ii) and (iii) imply that $\sum_n P_n=I$. Properties (iv) and (v) imply that $D$ is reduced by $P_n\h$ for all $n$ and $D=\bigoplus_n D_n$. 

Since each of the spaces $P_n\h$ is finite dimensional, by Proposition \ref{usetakagi} there is an orthonormal basis of $P_n\h$ that diagonalizes $D_n$. Thus $D$ is a diagonalizable operator.  
\end{proof}

Note that the diagonalizable antilinear operator can be assumed to have a non-negative diagonal. Namely, assume $D$ on $\h$ is diagonal with respect to the orthonormal basis $\{e_n\}$, i.e. $D e_n = d_ne_n = |d_n| e^{i\theta_n}$. Then for all $x = \sum_n(x,e_n)e_n \in\h$
$$ Dx = \sum_n ( e_n, x) d_ne_n = \sum_n ( e^{i\theta_n/2} e_n , x) |d_n| e^{i\theta_n/2} e_n $$
so that $D$ is diagonal with respect to the orthonormal basis $\{e^{i\theta_n/2} e_n \}$ with  non-negative diagonal.

\subsection{Finite rank generalization to real linear operators}

The question whether there is a more general version of Theorem \ref{WeylvonN} is treated in finite dimensions. 

\begin{lemma}\label{commoneigvect}
Let $N$ be a complex linear normal operator and $S$ an antilinear self-adjoint operator on a finite dimensional Hilbert space $\h$ such that $N S = S N^*$. Then $N$, $N^*$ and $S$ have a common eigenvector. 
\end{lemma}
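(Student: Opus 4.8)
The plan is to exploit the spectral decomposition of the normal operator $N$ and to show that the intertwining relation forces $S$ to respect it. Since $\h$ is finite dimensional and $N$ is normal, there is an orthonormal basis of eigenvectors of $N$, and the eigenspaces $\h_\lambda = \N(N-\lambda)$ of $N$ coincide with the eigenspaces $\N(N^*-\overline\lambda)$ of $N^*$; together they decompose $\h$ orthogonally. I would fix any eigenvalue $\lambda$ so that $\h_\lambda\neq\{0\}$. The goal is to produce an eigenvector of $S$ lying inside $\h_\lambda$, for such a vector is automatically a common eigenvector of $N$ and $N^*$ as well.

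First I would check that $S$ maps $\h_\lambda$ into itself. For $x\in\h_\lambda$ we have $N^*x=\overline\lambda x$, and using $NS=SN^*$ together with the antilinearity of $S$,
$$ N(Sx) = S(N^*x) = S(\overline\lambda x) = \lambda\, Sx, $$
so $Sx\in\h_\lambda$. This is the crux of the argument, and it is precisely here that antilinearity is indispensable: the complex conjugation carried by the eigenvalue of $N^*$ is undone by $S$, returning the eigenvalue $\lambda$ of $N$. Had $S$ been complex linear, the eigenspaces would not match up in this way.

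Next I would verify that $\h_\lambda$ actually reduces $S$, so that the restriction $S|_{\h_\lambda}$ is again antilinear and self-adjoint. Self-adjointness of $S$ gives $(Sz,x)=\overline{(z,Sx)}$ for all $x,z\in\h$; taking $x\in\h_\lambda$ and $z\in\h_\lambda^\perp$ and using $Sx\in\h_\lambda$ from the previous step yields $(Sz,x)=0$, whence $S$ preserves $\h_\lambda^\perp$ as well. Thus $S|_{\h_\lambda}$ is an antilinear self-adjoint operator on the finite dimensional space $\h_\lambda$.

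Finally I would invoke Proposition \ref{usetakagi}: an antilinear self-adjoint operator on a nonzero finite dimensional space is unitarily diagonalizable, and in particular has an eigenvector $v\in\h_\lambda$, say $Sv=\mu v$. Then $Nv=\lambda v$, $N^*v=\overline\lambda v$, and $Sv=\mu v$, so $v$ is the desired common eigenvector. The only genuine obstacle is the first step; once $S$ is seen to preserve the joint eigenspace of $N$ and $N^*$, the conclusion reduces to the finite dimensional diagonalizability already established in Proposition \ref{usetakagi}.
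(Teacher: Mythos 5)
Your proof is correct and follows essentially the same route as the paper: both fix an eigenspace $\W$ of the normal operator $N$, use the intertwining relation together with antilinearity to show $S\W\subset\W$, and then extract an eigenvector of the restricted antilinear self-adjoint operator. The only cosmetic differences are that you make the reducing property of $\W$ explicit (the paper leaves it implicit) and you invoke Proposition \ref{usetakagi} for the existence of an eigenvector where the paper cites an external result on real linear operators.
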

\begin{proof}
Let $y\in \W = \{x\in\h : Nx = \lambda x\} = \{x\in\h : N^*x=\overline \lambda x \}$ for some eigenvalue $\lambda\in\Complex$ of $N$. Then $N S y = S N^* y = S \overline\lambda y = \lambda S y$. Thus $ S y \in \W$. Therefore, the subspace $W$ is $S$-invariant. 

The restriction $S|_\W$ is an antilinear self-adjoint operator on $\W$, and as such \cite{reallinearoperator} has an eigenvalue $r\in\Real$ corresponding to an eigenvector $z\in\W$. Then $z$ is the desired common eigenvector. 
\end{proof}

\begin{prop}
Let $A=N+S$ be a real linear operator on a finite dimensional Hilbert space $\h$, where $N$ is complex linear normal, $S$ is antilinear self-adjoint and they satisfy $N S = S N^*$. Then $A$ is unitarily diagonalizable. 
\end{prop}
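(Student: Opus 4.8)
The plan is to argue by induction on $\dim\h$, stripping off one common eigenvector at each stage by means of Lemma \ref{commoneigvect}. The base case $\dim\h = 1$ is immediate: any unit vector $e$ satisfies $Ne = \lambda e$ and $Se = re$ with $r\in\Real$, so $Ae = (\lambda+r)e$ and $A$ is already diagonal. For the inductive step, Lemma \ref{commoneigvect} supplies a common eigenvector of $N$, $N^*$ and $S$, which I normalize to a unit vector $e_1$ with $Ne_1 = \lambda e_1$, $N^*e_1 = \overline\lambda e_1$ and $Se_1 = re_1$, $r\in\Real$. Then $Ae_1 = Ne_1 + Se_1 = (\lambda+r)e_1$, so $e_1$ is an eigenvector of $A$.

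First I would verify that $\spn\{e_1\}$ is a reducing subspace for $N$, $N^*$ and $S$ simultaneously, hence for $A = N+S$; since $e_1$ is already an eigenvector of each, it suffices to show $\{e_1\}^\perp$ is invariant under all three. For $w\perp e_1$ one has $(Nw, e_1) = (w, N^*e_1)$ and $(N^*w, e_1) = (w, Ne_1)$, each a scalar multiple of $(w,e_1) = 0$, so $Nw, N^*w \in \{e_1\}^\perp$. For the antilinear part I would invoke the defining relation $(Sx,y) = \overline{(x, S^*y)}$ of the antilinear adjoint to compute $(Sw, e_1) = \overline{(w, S^*e_1)} = \overline{(w, Se_1)} = \overline{(w, re_1)} = r\,\overline{(w,e_1)} = 0$, where the reality of $r$ is essential. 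Thus $Sw\in\{e_1\}^\perp$ as well, and $\{e_1\}^\perp$ reduces $A$.

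With the reducing property established, the restrictions $N' = N|_{\{e_1\}^\perp}$ and $S' = S|_{\{e_1\}^\perp}$ inherit the hypotheses: $N'$ is complex linear normal, $S'$ is antilinear self-adjoint, and because $(N|_{\{e_1\}^\perp})^* = N^*|_{\{e_1\}^\perp}$ on a reducing subspace, the intertwining relation passes as $N'S' = S'(N')^*$. Since $A|_{\{e_1\}^\perp} = N' + S'$, the inductive hypothesis yields an orthonormal basis $\{e_2,\dots,e_n\}$ of $\{e_1\}^\perp$ consisting of eigenvectors of $A$; adjoining $e_1$ gives an orthonormal eigenbasis of $A$ on $\h$, so $A$ is unitarily diagonalizable. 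The genuinely delicate part of the argument is already encapsulated in Lemma \ref{commoneigvect}, which produces the common eigenvector; the remainder is the classical \emph{reduce and induct} scheme. The only place antilinearity intervenes is the verification that $\{e_1\}^\perp$ is $S$-invariant, and there the crux is precisely that the eigenvalue $r$ of the antilinear self-adjoint $S$ is real, guaranteed by the finite-dimensional spectral theory of self-adjoint antilinear operators in \cite{reallinearoperator}.
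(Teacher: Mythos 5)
Your proof is correct and takes essentially the same route as the paper's: extract a common eigenvector $e_1$ via Lemma \ref{commoneigvect}, verify that $\spn\{e_1\}^\perp$ is invariant (with the reality of the eigenvalue of $S$ doing the work for the antilinear part), and iterate/induct on dimension. The paper condenses the invariance check into the single computation $(e_1,Ay)=0$ rather than treating $N$, $N^*$ and $S$ separately, but the content is identical.
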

\begin{proof}
By Lemma \ref{commoneigvect} there exists a unit vector $e_1\in\h$ such that $Ne_1=\lambda e_1$, $N^*e_1=\overline\lambda e_1$ and $S e_1 = r e_1$ for some $\lambda\in\Complex$ and $r\in\Real$. Split $\h$ as $\h=\spn\{e_1\} \oplus \spn\{e_1\}^\perp$. Then obviously $Ax\in\spn\{e_1\}$ for $x\in\spn\{e_1\}$. But we have also 
$$ (e_1, Ay) = (e_1, Ny) + (e_1, S y) = (N^*e_1, y) + ( S e_1, y) = \lambda (e_1,y) + r(e_1,y)=0 $$
for every $y\in\spn\{e_1\}^\perp$. Thus $Ay \in \spn\{e_1\}^\perp$. Hence, $A=D_1\oplus A_1$, where $D_1$ is trivially diagonal on $\spn\{e_1\}$ and $A_1$ is a real linear operator on $\spn\{e_1\}^\perp$ satisfying the assumptions of the proposition. Iterate the previous to finally get an orthonormal basis $\{e_n\}$ of $\h$. 
\end{proof}

This raises the question if, similarly as in the proof of Theorem \ref{WeylvonN}, a reduction to the finite dimensional case can be made to prove an analogue of the Weyl--von Neumann Theorem for real linear operators satisfying the commutation property above. However, at this point, it remains unclear whether this can be done. 

\subsection{Complex symmetric operators}

In what follows, we make some remarks on complex symmetric operators and their relation to antilinear self-adjoint operators. A complex linear operator $S$ on $\h$ is called $\tau$-symmetric if 
$$\tau S^* \tau = S, $$
where $\tau$ is a unitary conjugation.  It is called complex symmetric in general if it is $\tau$-symmetric with respect to some unitary conjugation $\tau$. Complex symmetric operators have been the object of recent investigations, and they have been shown to include a variety of important operators, for example all normal operators, Hankel operators, compressed Toeplitz operators and many standard integral operators \cite{garcia,garcia2,garcia:newclasses}. 

Complex symmetric operators and antilinear operators are related in the following manner. 

\begin{prop}
\begin{enumerate}
\item If $S$ is $\tau$-symmetric on $\h$, then the operator $S\tau$ is antilinear self-adjoint. 
\item If the operator $A$ on $\h$ is antilinear self-adjoint, then $A\tau$ is $\tau$-symmetric for any unitary conjugation $\tau$.  
\end{enumerate}
\end{prop}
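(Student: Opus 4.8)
The plan is to verify both implications by direct manipulation of the defining relation $\tau S^* \tau = S$ together with the involution and self-adjointness of $\tau$, and the elementary fact that composing an antilinear operator with a complex linear one yields an antilinear operator (and vice versa). Throughout I will use that $\tau$ is a unitary conjugation, so $\tau^* = \tau$ and $\tau^2 = I$.

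For part (1), let $S$ be $\tau$-symmetric and set $A = S\tau$. First I would note that $A$ is antilinear, being the composition of the complex linear $S$ with the antilinear $\tau$. To check self-adjointness, I compute $A^* = (S\tau)^* = \tau^* S^* = \tau S^*$. The goal is to show this equals $A = S\tau$, i.e. that $\tau S^* = S\tau$. But the $\tau$-symmetry hypothesis reads $\tau S^* \tau = S$; multiplying on the right by $\tau$ and using $\tau^2 = I$ gives exactly $\tau S^* = S \tau$. Hence $A^* = A$, as required.

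For part (2), let $A$ be antilinear self-adjoint and let $\tau$ be an arbitrary unitary conjugation. Set $S = A\tau$. First I would observe that $S$ is complex linear, being the composition of two antilinear operators. To verify $\tau$-symmetry, I compute $S^* = (A\tau)^* = \tau^* A^* = \tau A$, using $\tau = \tau^*$ and $A = A^*$. Then $\tau S^* \tau = \tau (\tau A) \tau = (\tau^2) A \tau = A\tau = S$, where again $\tau^2 = I$. This is precisely the $\tau$-symmetry condition.

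I do not anticipate a genuine obstacle here; the statement is essentially a bookkeeping exercise in the algebra of conjugations. The one point that warrants care, rather than difficulty, is keeping track of how the adjoint interacts with antilinear factors: for a real linear operator the adjoint reverses the order of a composition in the usual way, and $\tau^* = \tau$ follows from $\tau$ being a self-adjoint involution (a unitary conjugation). Once those facts are in place, both parts reduce to a single application of $\tau^2 = I$.
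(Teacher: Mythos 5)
Your proposal is correct and follows essentially the same route as the paper: both parts reduce to the computation $(S\tau)^* = \tau^* S^* = \tau S^*\tau^2 = S\tau$ and its counterpart $\tau(A\tau)^*\tau = \tau\tau A\tau = A\tau$, using $\tau^*=\tau$ and $\tau^2=I$. Your additional remarks on the linearity type of the compositions are fine but not needed beyond what the paper records.
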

\begin{proof}
The first assertion follows from 
$$ (S\tau)^* = \tau^*S^* = \tau S^* \tau^2 = S\tau. $$
The second follows from 
$$ (A\tau)^* = \tau^* A^* = \tau (A\tau) \tau . \qedhere $$
\end{proof}

This connection may be useful in some contexts as definineg antilinear self-adjointness is basis independent whereas complex symmetricity is defined essentially through the choice of an orthonormal basis by fixing a unitary conjugation. 

In finite dimensions, by the Takagi factorization, a matrix $S$ is complex symmetric, $S=S^T$, if and only if it is unitarily condiagonalizable, i.e. there exists a unitary matrix $U$ and a diagonal matrix $D$ (with non-negative entries) such that $S=UDU^T$. Using Theorem \ref{WeylvonN} and the correspondence with antilinear operators, it can be seen that, in the infinite dimensional case, complex symmetric operators are arbitrarily close to condiagonalizable operators. Here, the analogue of the transpose of $U$ in the infinite dimensional setting is $\tau U^* \tau$. 

\begin{prop}
Let $S$ be a complex symmetric operator on $\h$ with respect to the unitary conjugation $\tau$. Then for any $\eps>0$ there exists a complex linear unitary operator $U$ and a diagonalizable complex linear operator $D$ such that 
$$\| S - UD\, \tau U^*\tau \| < \eps .$$
\end{prop}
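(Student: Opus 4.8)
The plan is to transfer the statement to the antilinear setting via the correspondence established just above, apply the main theorem there, and then translate the resulting antilinear diagonalization back into the condiagonal shape $UD\tau U^*\tau$. First I would put $A=S\tau$. Since $S$ is $\tau$-symmetric, part (1) of the preceding correspondence proposition shows that $A$ is antilinear self-adjoint, and $S=A\tau$ because $\tau^2=I$. Applying Theorem \ref{WeylvonN} to $A$ (for some fixed $p\in(1,\infty)$) produces a diagonalizable antilinear self-adjoint operator $D_1$ with $A-D_1$ compact and $\|A-D_1\|_p<\eps$; since the operator norm never exceeds the Schatten $p$-norm, $\|A-D_1\|<\eps$ as well.

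The algebraic heart is to rewrite $D_1\tau$ in the required form. Let $\{f_n\}$ be an orthonormal eigenbasis with $D_1 f_n=d_n f_n$, let $\kappa$ be the conjugation with $\kappa f_n=f_n$, and let $D_0$ be the complex linear diagonal operator with $D_0 f_n=d_n f_n$. A short coordinate computation gives the factorization $D_1=D_0\kappa$. Next, using Proposition \ref{eigbasisforconj} fix the orthonormal basis $\{e_n\}$ with $\tau e_n=e_n$ and let $U$ be the complex linear unitary determined by $Ue_n=f_n$; exactly as in the earlier proposition relating two unitary conjugations, one verifies $\kappa=U\tau U^*$. Finally set $D=U^*D_0U$, which is diagonalizable, being unitarily equivalent to the diagonal $D_0$. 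Substituting yields
$$ UD\tau U^*\tau = U(U^*D_0U)\tau U^*\tau = D_0(U\tau U^*)\tau = D_0\kappa\tau = D_1\tau, $$
so the condiagonal expression equals $D_1\tau$ exactly.

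It then remains only to estimate: since $S=A\tau$ and $\tau$ is isometric and surjective, $\|S-UD\tau U^*\tau\|=\|(A-D_1)\tau\|=\|A-D_1\|<\eps$, which is the claim.

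I do not anticipate a serious obstacle here; the content is essentially bookkeeping once $A=S\tau$ is formed. The one place demanding care is matching the precise algebraic shape $UD\tau U^*\tau$: one must take $D$ to be the conjugated diagonal $U^*D_0U$ rather than $D_0$ itself, and confirm $\kappa=U\tau U^*$ for the specific $U$ sending each $e_n$ to $f_n$, so that the two antilinear factors $\tau$ collapse correctly to reproduce $\kappa$. A secondary point worth recording explicitly is the passage from the Schatten-$p$ estimate furnished by Theorem \ref{WeylvonN} to the operator-norm estimate needed here, which is immediate from $\|\cdot\|\le\|\cdot\|_p$.
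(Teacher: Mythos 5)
Your proposal is correct and takes essentially the same route as the paper: form $A=S\tau$, apply Theorem \ref{WeylvonN}, map the $\tau$-fixed basis $\{e_n\}$ onto the eigenbasis $\{f_n\}$ by a complex linear unitary $U$, and note that $D=U^*D_0U$ is precisely the diagonal operator with respect to $\{e_n\}$ that the paper uses. The only difference is presentational: you verify $UD\,\tau U^*\tau=\tilde D\tau$ structurally via the factorization $\tilde D=D_0\kappa$ with $\kappa=U\tau U^*$, where the paper does the same check by a direct coordinate computation.
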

\begin{proof}
Let $\{e_n\}$ be the orthonormal basis of $\h$ for which $\tau e_n = e_n$. Since $S\tau$ is antilinear and self-adjoint, by Theorem \ref{WeylvonN}, there is an antilinear diagonalizable operator $\tilde D$ such that $\|S\tau-\tilde D\|\leq \|S\tau -\tilde D\|_p < \eps$. Let $\{f_n\}$ be the orthonormal basis diagonalizing $\tilde D$, i.e. $\tilde Df_n=d_nf_n$ where $d_n \geq 0$. Define $U$ by $Ue_n = f_n$. Then we have for all $x\in\h$
\begin{eqnarray*}
S\tau x - Dx &=& S\tau x - \sum_n d_n (f_n, x) f_n = S\tau x - \sum_n d_n (Ue_n, x) Ue_n \\
&=& S\tau x - U \sum_n d_n (e_n , U^* x ) e_n = (S - U D \, \tau U^* \tau) \tau x,  
\end{eqnarray*}
where $D$ is the complex linear diagonal operator with respect to $\{e_n\}$. From this we can infer, upon using the norm estimate given  by Theorem \ref{WeylvonN}, that
$ \| S - U D \, \tau U^* \tau \| < \eps$. 
\end{proof}

We also have the following. 

\begin{prop}
Let $S$ be a complex symmetric operator on $\h$ with respect to the unitary conjugation $\tau$. Then there exist a unitary conjugation $\kappa$ and a diagonalizable complex linear operator $D$ with non-negative diagonal such that $\kappa$ and $D$ are diagonalized with respect to the same orthonormal basis and 
$$ \| S - \tau D \kappa \|_p < \eps .$$
\end{prop}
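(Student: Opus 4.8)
The plan is to reduce everything to the antilinear Weyl--von Neumann theorem already established. Since $S$ is $\tau$-symmetric, the proposition relating complex symmetric operators to antilinear self-adjoint ones gives that $A := S\tau$ is antilinear and self-adjoint, and because $\tau^2 = I$ we recover $S = A\tau$. Applying Theorem~\ref{WeylvonN} to $A$ produces a diagonalizable antilinear self-adjoint operator $\tilde D$ with $A - \tilde D$ compact and $\|A - \tilde D\|_p < \eps$. Moreover, by the remark following the proof of Theorem~\ref{WeylvonN}, I may arrange the diagonal of $\tilde D$ to be non-negative, so that $\tilde D f_n = d_n f_n$ with $d_n \geq 0$ for some orthonormal basis $\{f_n\}$.

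Right-multiplying the splitting $A = \tilde D + (A - \tilde D)$ by $\tau$ gives $S = \tilde D\tau + (A - \tilde D)\tau$. The perturbation $(A - \tilde D)\tau$ is complex linear, and since $A - \tilde D$ is antilinear I would invoke the singular-value identity recorded just before Lemma~\ref{lem:schatten}, factoring the antilinear operator $A - \tilde D$ as $\bigl((A-\tilde D)\tau\bigr)\tau$, to conclude $\|(A - \tilde D)\tau\|_p = \|A - \tilde D\|_p < \eps$. It then remains only to rewrite the diagonal part $\tilde D\tau$ in the prescribed form $\tau D\kappa$.

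For this last step I would pass to the orthonormal basis $g_n := \tau f_n$, which is orthonormal precisely because $\tau$ is antiunitary. Since each $d_n$ is real, a direct computation gives $(\tau \tilde D\tau) g_n = \tau \tilde D f_n = d_n g_n$, so $\tau\tilde D\tau$ is again antilinear self-adjoint and diagonal, now with respect to $\{g_n\}$. Defining $D$ to be the complex linear diagonal operator with $D g_n = d_n g_n$ and $\kappa$ the unitary conjugation fixing each $g_n$, both operators are diagonalized in the single basis $\{g_n\}$ and $D$ carries a non-negative diagonal, exactly as the statement requires. One checks $D\kappa = \tau\tilde D\tau$ by evaluating on $\{g_n\}$, since both sides are antilinear and hence coincide once they agree on a basis; multiplying through by $\tau$ and using $\tau^2 = I$ then yields $\tau D\kappa = \tilde D\tau$. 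Combined with the previous paragraph this gives $\|S - \tau D\kappa\|_p = \|(A - \tilde D)\tau\|_p < \eps$.

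I expect the genuine friction here to be bookkeeping rather than conceptual. The point to watch is that $\tilde D$ is diagonal in $\{f_n\}$ while $\tau$ is adapted to a different basis, so the simultaneous diagonalization of $D$ and $\kappa$ demanded by the statement surfaces only after the conjugation-by-$\tau$ device $\tilde D \mapsto \tau\tilde D\tau$ together with the relabelling $g_n = \tau f_n$. The secondary item to state with care is the invariance $\|(A-\tilde D)\tau\|_p = \|A - \tilde D\|_p$, which rests on multiplication by the antiunitary $\tau$ leaving all singular values unchanged.
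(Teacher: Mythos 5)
Your argument is correct and follows essentially the same route as the paper: apply Theorem \ref{WeylvonN} to the antilinear self-adjoint operator associated with $S$, take the diagonal non-negative form guaranteed by the remark after that theorem, and factor the resulting diagonalizable antilinear operator as a complex linear diagonal operator times the conjugation fixing the diagonalizing basis, with the Schatten norm preserved under multiplication by the antiunitary $\tau$. The only (harmless) deviation is that you start from $S\tau$ rather than $\tau S$, which forces the extra conjugation step $\tilde D \mapsto \tau\tilde D\tau$ and the relabelling $g_n = \tau f_n$; the paper works with $\tau S$ so that $\tilde D = D\kappa$ and $\|S - \tau D\kappa\|_p = \|\tau S - \tilde D\|_p$ come out immediately.
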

\begin{proof}
Since $\tau S$ is antilinear self-adjoint, by Theorem \ref{WeylvonN} there exists an antilinear operator $\tilde D$, diagonal with respect to an orthonormal basis $\{e_n\}$, with a non-negative diagonal, such that $\| \tau S - \tilde D \|_p < \eps$. We can factor $\tilde D =D \kappa$ where $D$ is complex linear diagonal with respect to $\{e_n\}$ and $\kappa$ is the unitary conjugation with respect to $\{e_n\}$. Then we have $\| S - \tau D \kappa \|_p < \eps$. 
\end{proof}

\bibliographystyle{plain}
\bibliography{references_santtu}
\end{document}